\documentclass{article}
\usepackage{graphicx}
\usepackage{amsmath,amssymb}

\newtheorem{theorem}{Theorem}[section]

\newtheorem{proposition}[theorem]{Proposition}

\newtheorem{lemma}[theorem]{Lemma}
\newtheorem{corollary}[theorem]{Corollary}

\newtheorem{example}[theorem]{Example}

\begin{document}


\title{Minimization of Constrained Quadratic forms in Hilbert Spaces}
\date{}
\author{Dimitrios Pappas\\Department of Statistics,\\Athens University of
Economics and Business,\\76 Patission Str, 10434, Athens, Greece
\\(dpappas@aueb.gr, pappdimitris@gmail.com)} \maketitle
\begin{abstract}A common optimization problem is the minimization of a symmetric positive
definite quadratic form $\langle x,Tx\rangle$ under linear
constrains. The solution to this problem may be given using the
Moore-Penrose inverse matrix. In this work we extend this result to
infinite dimensional complex Hilbert spaces, making use of the
generalized inverse of an operator. A generalization is given for
positive diagonizable and arbitrary positive operators,
 not necessarily invertible, considering as constraint a singular
 operator. In particular, when $T$ is positive semidefinite, the minimization
 is considered for all vectors belonging to $\mathcal{N}(T)^\perp$.
\end{abstract}
\textit{Keywords}: Quadratic form, Constrained Optimization, Moore-Penrose inverse,\\
Positive operator
\\\textit{2010 Mathematics Subject Classification}: 47A05, 47N10, 15A09.

\section{Introduction}
Quadratic forms have played a central role in the history of
mathematics, in both the finite and the infinite dimensional case.
Many authors have studied problems on minimizing (or maximizing)
quadratic forms under various constrains, such as vectors
constrained to lie within the unit simplex (Broom \cite{broom}).  A
similar result is the minimization of a more general case of a
quadratic form defined in a finite-dimensional real Euclidean space
under linear constraints (see e.g. La Cruz \cite{cruz},  Manherz and
Hakimi \cite{hak}), with many applications in network analysis and
control theory. In a classical book of Optimization Theory by
Luenberger \cite{luen}, various similar optimization problems are
presented, for both finite and infinite dimensions.\\In the field of
applied mathematics, a strong interest is shown in applications of
the generalized inverse of matrices or operators. Various types of
generalized inverses are used whenever a matrix/ operator is
singular, in many fields of both computational and also theoretical
aspects. An application of the Moore-Penrose inverse in the finite
dimensional case, is the minimization of a symmetric positive
definite quadratic form under linear constrains. This application
can be used in many optimization problems, such as electrical
networks
 ( Ben- Israel \cite{Israel2}), finance (Markowitz \cite{Mark1,Mark2} ) etc. A similar result for positive semidefinite quadratic
 forms with many applications in Signal Processing is presented by Stoica et al
\cite{Stoika},  Gorkhov and Stoica \cite{Goro}.\\In this work we
extend the result of Ben- Israel \cite{Israel2} for positive
operators acting on infinite dimensional complex Hilbert spaces. We
will consider the quadratic form as a diagonizable, diagonal or a
positive operator in general, not necessarily invertible. In the
case of a positive semidefinite quadratic form, another approach
proposed for this problem is the constrained minimization to take
place only for the vectors perpendicular to its kernel. This can be
achieved using an appropriate decomposition of the Hilbert space.
\\Another possible candidate for this work would be the class of compact self
adjoint operators, making use of the spectral theorem.
Unfortunately, compact operators do not have closed range, therefore
their generalized inverse is not a bounded operator.

\section{Preliminaries and notation}The notion of the
generalized inverse of a (square or rectangular) matrix was first
introduced by H. Moore in 1920, and again by R. Penrose in 1955.
These two definitions are equivalent, and the generalized inverse of
an operator or matrix is also called the Moore- Penrose inverse. It
is known that when $T$ is singular, then its unique generalized
inverse $T^{\dagger}$ (known as the Moore- Penrose inverse) is
defined. In the case when $T$ is a real $r\times m$ matrix, Penrose
showed that there is a unique matrix satisfying the four Penrose
equations, called the generalized inverse of $T$, noted by
$T^\dagger$.\\In what follows, we consider $\mathcal{H}$ a separable
infinite dimensional Hilbert space and all operators mentioned are
supposed to have closed range.\\The generalized inverse, known as
Moore-Penrose inverse, of an operator $T \in
\mathcal{B}(\mathcal{H})$ with closed range, is the unique operator
satisfying the following four conditions:
\begin{equation}\label{eq-MooPenr}
   TT^\dagger=(TT^\dagger)^*,\qquad T^\dagger T=(T^\dagger T)^*,\qquad
   TT^\dagger T=T,\qquad T^\dagger TT^\dagger =T^\dagger
\end{equation}
where $T^*$ denotes the adjoint operator of $T$.

\noindent It is easy to see that $TT^\dagger$ is the orthogonal
projection of $\mathcal{H}$ onto $\mathcal{R}(T)$ , denoted by
$P_{T}$, and that $T^\dagger T$ is the orthogonal projection of
$\mathcal{H}$ onto $\mathcal{R}(T^*)$ noted by $P_{T^*}$. It is well
known that $\mathcal{R}(T^\dagger)=\mathcal{R}(T^*)$ , and that
$T^\dagger = (T^*T)^{-1}|_{\mathcal{R}(T^*)}T^*$.\\It is also known
that $T^\dagger$ is bounded if and only if $T$ has a closed
range.\\If $T$ has a closed range and  commutes with $T^\dagger$,
then $T$ is called an EP operator. EP operators constitute a wide
class of operators which includes the self adjoint operators, the
normal operators and the invertible operators.
\\Let us consider the equation $Tx=b,T \in B(\mathcal{H})$, where $T$ is singular. If $b\notin R(T)$, then
the equation has no solution. Therefore, instead of trying to solve
the equation $\|Tx-b\|=0$, we may look for a vector $u$ that
minimizes the norm $\|Tx-b\|$. Note that the vector $u$ is unique.
 In this case we consider the equation $Tx=P_{R(T)}b$, where
$P_{R(T)}$ is the orthogonal projection on $\mathcal{R}(T)$.
\\The following two propositions can be found in Groetsch \cite{groe} and hold for operators and matrices:

\begin{proposition}\label{p1}
Let $T \in \mathcal{B}(\mathcal{H})$ and $b\in \mathcal{H}$. Then,
for $u\in \mathcal{H}$, the following are equivalent:
\begin{enumerate}
\item [(i)]$Tu=P_{R(T)}b$
\item [(ii)]$\|Tu-b\| \leq\parallel Tx-b\|, \forall x \in \mathcal{H}$
\item [(iii)]$T^*Tu=T^*b$

\end{enumerate}
\end{proposition}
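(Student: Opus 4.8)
The plan is to prove the equivalence of the three conditions characterizing a least-squares solution $u$ to $Tx = b$. This is a standard projection-theorem argument, and I would establish the equivalences in the cycle $(i) \Leftrightarrow (iii)$ and $(i) \Leftrightarrow (ii)$, exploiting the geometry of the orthogonal decomposition $\mathcal{H} = \mathcal{R}(T) \oplus \mathcal{R}(T)^\perp$ together with the identity $\mathcal{R}(T)^\perp = \mathcal{N}(T^*)$, which holds for bounded operators with closed range.

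First I would prove $(i) \Leftrightarrow (iii)$, since this is purely algebraic. For the forward direction, suppose $Tu = P_{R(T)}b$. Then $Tu - b = P_{R(T)}b - b = -(I - P_{R(T)})b$, which lies in $\mathcal{R}(T)^\perp = \mathcal{N}(T^*)$. Applying $T^*$ therefore gives $T^*(Tu - b) = 0$, i.e. $T^*Tu = T^*b$. Conversely, if $T^*Tu = T^*b$, then $T^*(Tu - b) = 0$, so $Tu - b \in \mathcal{N}(T^*) = \mathcal{R}(T)^\perp$. Since $Tu \in \mathcal{R}(T)$, the vector $Tu$ is precisely the orthogonal projection of $b$ onto $\mathcal{R}(T)$, giving $Tu = P_{R(T)}b$. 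The key fact used here is the closed-range hypothesis, which guarantees $\mathcal{R}(T)$ is closed and hence that the orthogonal projection $P_{R(T)}$ is well-defined and that $\mathcal{R}(T)^\perp = \mathcal{N}(T^*)$.

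Next I would establish $(i) \Leftrightarrow (ii)$ via the orthogonal decomposition and the Pythagorean theorem. For any $x \in \mathcal{H}$, write
\begin{equation*}
Tx - b = (Tx - P_{R(T)}b) + (P_{R(T)}b - b).
\end{equation*}
The first summand lies in $\mathcal{R}(T)$ and the second in $\mathcal{R}(T)^\perp$, so they are orthogonal, and Pythagoras yields $\|Tx - b\|^2 = \|Tx - P_{R(T)}b\|^2 + \|P_{R(T)}b - b\|^2$. The second term is independent of $x$, so $\|Tx - b\|$ is minimized exactly when $\|Tx - P_{R(T)}b\| = 0$, that is, when $Tx = P_{R(T)}b$. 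Taking $x = u$ shows that $(i)$ holds if and only if $u$ attains the minimum over all $x$, which is precisely $(ii)$.

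I do not anticipate a genuine obstacle here, as the result is a clean application of the projection theorem; the only point requiring care is to invoke the closed-range assumption at the right moment, so that $P_{R(T)}$ exists as a bounded orthogonal projection and the identity $\mathcal{R}(T)^\perp = \mathcal{N}(T^*)$ is available. The uniqueness of the minimizing value $\|Tu - b\|$ (as opposed to the minimizer $u$, which need not be unique when $T$ is singular) follows immediately from the Pythagorean identity, since the minimum equals $\|P_{R(T)}b - b\|$ regardless of the choice of $u$.
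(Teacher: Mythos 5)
Your proof is correct: the paper itself gives no proof of this proposition, simply citing Groetsch \cite{groe}, and your argument is exactly the standard projection-theorem proof used there (orthogonal decomposition of $Tx-b$, Pythagoras for the equivalence with (ii), and $\mathcal{R}(T)^\perp=\mathcal{N}(T^*)$ for the equivalence with (iii)). The only remark worth making is that $\mathcal{R}(T)^\perp=\mathcal{N}(T^*)$ needs no closed-range hypothesis; closedness of $\mathcal{R}(T)$ is what guarantees that $P_{R(T)}$ projects onto the range itself and that the minimum in (ii) is attained, which is where you invoke it anyway.
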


Let $\mathbb{B}=\{u\in \mathcal{H} |  T^*Tu=T^*b\}$. This set of
solutions is closed and convex, therefore, it has a unique vector
with minimal norm. In the literature, Groetsch\cite{groe},
$\mathbb{B}$ is known as the set of the generalized solutions.

\begin{proposition}\label{p2}
Let $T \in \mathcal{B}(\mathcal{H}), b\in \mathcal{H}$, and the
equation $Tx=b$. Then, if $T^\dag$ is the generalized inverse of
$T$, we have that $T^\dag b = u$, where $u$ is the minimal norm
solution defined above.
\end{proposition}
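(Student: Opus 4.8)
The plan is to prove Proposition~\ref{p2}, which asserts that the Moore--Penrose inverse applied to $b$ produces the minimal-norm element of the generalized solution set $\mathbb{B} = \{u \in \mathcal{H} \mid T^*Tu = T^*b\}$. The strategy is to first verify that $T^\dagger b$ actually belongs to $\mathbb{B}$, and then to show it has minimal norm among all members of $\mathbb{B}$. These two facts together pin down $T^\dagger b$ uniquely, since by the discussion preceding Proposition~\ref{p2} the set $\mathbb{B}$ is closed and convex and hence possesses a unique element of minimal norm.

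First I would check membership. Set $u = T^\dagger b$. Using the Penrose equations~\eqref{eq-MooPenr}, I would compute $T^*T u = T^*T T^\dagger b$. The key identity here is that $TT^\dagger = P_T$ is the orthogonal projection onto $\mathcal{R}(T)$, so $TT^\dagger$ is self-adjoint and fixes $\mathcal{R}(T)$. Then $T^*T T^\dagger b = T^*(TT^\dagger) b = (TT^\dagger T)^* \cdot$-type manipulation; more directly, since $TT^\dagger = (TT^\dagger)^*$ and $TT^\dagger T = T$, one gets $T^* T T^\dagger = (TT^\dagger T)^* = T^*$, whence $T^*TT^\dagger b = T^* b$. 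This shows $u = T^\dagger b \in \mathbb{B}$, so it is indeed a generalized solution in the sense of Proposition~\ref{p1}.

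Next I would establish minimality. A general element of $\mathbb{B}$ can be written as $u + w$ where $w$ lies in the solution set of the homogeneous equation, i.e. $T^*Tw = 0$. Since $\mathcal{N}(T^*T) = \mathcal{N}(T)$, this means $w \in \mathcal{N}(T)$. On the other hand, $T^\dagger b \in \mathcal{R}(T^\dagger) = \mathcal{R}(T^*) = \mathcal{N}(T)^\perp$. Therefore $u \perp w$, and by the Pythagorean identity $\|u + w\|^2 = \|u\|^2 + \|w\|^2 \geq \|u\|^2$, with equality only when $w = 0$. This shows $u = T^\dagger b$ is the unique minimal-norm element of $\mathbb{B}$.

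The orthogonality argument in the minimality step is the crux, and the main thing to get right is the chain of range/kernel identities $\mathcal{R}(T^\dagger) = \mathcal{R}(T^*) = \mathcal{N}(T)^\perp$ together with $\mathcal{N}(T^*T) = \mathcal{N}(T)$; all of these are recorded or immediate from the preliminaries, so the obstacle is bookkeeping rather than genuine difficulty. It is worth noting that the closed-range hypothesis on $T$ is what guarantees $T^\dagger$ is bounded and that the orthogonal decomposition $\mathcal{H} = \mathcal{N}(T) \oplus \mathcal{N}(T)^\perp$ behaves well; without it the minimal-norm element need not be attained by a bounded operator, so I would flag that this is where that standing assumption is used.
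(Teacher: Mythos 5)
Your proof is correct: membership of $T^\dagger b$ in $\mathbb{B}$ via $T^*TT^\dagger=(TT^\dagger T)^*=T^*$, and minimality via the orthogonal splitting of any solution as $T^\dagger b + w$ with $w\in\mathcal{N}(T^*T)=\mathcal{N}(T)$ and $T^\dagger b\in\mathcal{R}(T^\dagger)=\mathcal{R}(T^*)=\mathcal{N}(T)^\perp$, followed by Pythagoras. Note that the paper itself gives no proof of Proposition~\ref{p2} --- it is quoted from Groetsch --- and your argument is essentially the standard one found there, so there is nothing to compare beyond saying your route is the expected one and is sound, including your observation that the closed-range hypothesis is what makes $T^\dagger$ bounded and the decomposition $\mathcal{H}=\mathcal{N}(T)\oplus\mathcal{R}(T^*)$ available.
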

This property has an application in the problem of minimizing a
symmetric positive definite quadratic form $\langle x,Qx\rangle$
subject to linear constraints, assumed consistent (see Theorem
\ref{th1}).\\We will denote by
 $\mbox{Lat}\,T$ the set of all
closed subspaces of the underlying Hilbert space $\mathcal{H}$
invariant under $T$.\\A self adjoint operator $T \in B(\mathcal{H})$
is positive when $\langle Tx,x\rangle \geq 0$ for all $x \in
\mathcal{H}$. Let $T$ be an invertible positive operator $T$ which
is diagonizable. Then, $T = U^* T_k U$ where $U$ is unitary and
$T_k$ is diagonal, of the form
$$T_k (x_1, x_2, \ldots) = (k_1 x_1, k_2 x_2, \ldots)$$ where $(k_n)_n$ is a bounded sequence of real numbers, and its terms are the eigenvalues of $T_k$, assumed positive.
Its inverse $T_k^{-1}$  is also a diagonal operator, with
corresponding sequence $k'_i = \frac{1}{k_i}$.
\\When $T_k$ is
singular, at least one of the $k_i$'s is equal to zero. Then, its
Moore-Penrose inverse has a corresponding sequence of diagonal
elements $k'_i$ defined as follows:\begin{eqnarray} k'_i=\left \{
\begin{array}{cccc}
\frac{1}{k_i}, &k_i \neq 0
\nonumber \\
0, & k_i =0\nonumber
\end{array} \right .
\end{eqnarray}\\
Since all the diagonal elements are nonnegative, in both cases $T_k$
has a unique square root $T_m$, which is also a diagonal operator
with corresponding sequence $m_n = \sqrt{k_n}$. Similar results
concerning diagonizable and diagonal operators can be found in
Conway  \cite{con}.\\As mentioned before, EP operators include
normal and self adjoint operators, therefore the operator $T$ in the
quadratic form studied in this work is EP. An operator $T$ with
closed range is called EP if $\mathcal{N}(T)=\mathcal{N}(T^*)$. It
is easy to see that
\begin{equation}
\label{def} T \text{
EP}\Leftrightarrow\mathcal{R}(T)=\mathcal{R}(T^*)\Leftrightarrow\displaystyle{\mathcal{R}(T)\mathop{\oplus}^{\perp}\mathcal{N}(T)=\mathcal{H}}\Leftrightarrow
TT^{\dag}=T^{\dag}T.
\end{equation}We take advantage of the fact that EP operators
have a simple canonical form according to the decomposition
$\mathcal{H}= \mathcal{R}(T)\oplus \mathcal{N}(T)$. Indeed an EP
operator $T$ has the following simple matrix form: $$T = \left[
{\begin{array}{*{20}c}
   A & 0  \\
   0 & 0  \\
\end{array}} \right]$$
where the operator $A:\mathcal{R}(T)\rightarrow \mathcal{R}(T)$ is
invertible, and its generalized inverse $T^\dagger$ has the form
$$T^\dagger= \left[ {\begin{array}{*{20}c}
   A^{-1} & 0  \\
   0 & 0  \\
\end{array}} \right]$$ (see Campbell and Meyer \cite{campb}, Drivaliaris et al \cite{Driv}).\\Another result used in our work, wherever a square root of a positive operator is used, is the fact that EP  operators have index  equal to 1 and so, $\mathcal{R}(T) = \mathcal{R}(T^2).$  (see Ben Israel \cite{Israel}, pages 156- 157)\\As mentioned above,
a necessary condition for the existance of a bounded generalized
inverse is that the operator has closed range. Nevertheless, the
range of the product of two operators with closed range is not
always closed.\\In Bouldin \cite{boul2} an equivalent condition is
given:
\begin{theorem}\label{th1} Let $A$ and $B$ be operators with closed range, and let
$$H_i = \mathcal{N}(A)\cap (\mathcal{N}(A)\cap \mathcal{R}(B) )^\bot
 = \mathcal{N}(A) \ominus \mathcal{R}(B)$$ The angle between $H_i$ and $\mathcal{R}(B)$ is positive if and
only if $AB$ has closed range.
\end{theorem}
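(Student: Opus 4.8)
The plan is to reduce the closed-range question for the product $AB$ to a statement about the sum of two closed subspaces, and then to translate that statement into the language of angles. First I would observe that $\mathcal{R}(AB)=A(\mathcal{R}(B))$, so the issue is whether the image under $A$ of the closed subspace $\mathcal{R}(B)$ is closed. Since $A$ has closed range, its restriction to $\mathcal{N}(A)^{\perp}$ is bounded below and hence a topological isomorphism onto $\mathcal{R}(A)$. Writing $Q=P_{\mathcal{N}(A)^{\perp}}$ for the orthogonal projection and using $Ax=A(Qx)$ (because $A$ annihilates $\mathcal{N}(A)$), one gets $A(\mathcal{R}(B))=A(Q\,\mathcal{R}(B))$. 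As $A$ is an isomorphism on $\mathcal{N}(A)^{\perp}$ and $Q\,\mathcal{R}(B)\subseteq\mathcal{N}(A)^{\perp}$, the set $A(Q\,\mathcal{R}(B))$ is closed if and only if $Q\,\mathcal{R}(B)$ is closed. Thus $AB$ has closed range if and only if $P_{\mathcal{N}(A)^{\perp}}(\mathcal{R}(B))$ is closed.

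The second step is a classical projection lemma: for closed subspaces, $P_{L}(M)$ is closed if and only if $M+L^{\perp}$ is closed. Applying this with $L=\mathcal{N}(A)^{\perp}$, so that $L^{\perp}=\mathcal{N}(A)$, I would conclude that $P_{\mathcal{N}(A)^{\perp}}(\mathcal{R}(B))$ is closed exactly when $\mathcal{R}(B)+\mathcal{N}(A)$ is closed. Hence $AB$ has closed range if and only if the sum $\mathcal{N}(A)+\mathcal{R}(B)$ of the two given closed subspaces is closed.

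The final step invokes the Friedrichs theory of angles between subspaces: the sum $M+N$ of two closed subspaces is closed if and only if the (Friedrichs) angle between them is positive, i.e. the cosine $c(M,N)<1$, where $c(M,N)=\sup\{|\langle u,v\rangle|:u\in M\cap(M\cap N)^{\perp},\ v\in N\cap(M\cap N)^{\perp},\ \|u\|,\|v\|\le 1\}$. With $M=\mathcal{N}(A)$ and $N=\mathcal{R}(B)$ the first factor is exactly $H_i=\mathcal{N}(A)\cap(\mathcal{N}(A)\cap\mathcal{R}(B))^{\perp}$. Since every $u\in H_i$ is orthogonal to $\mathcal{N}(A)\cap\mathcal{R}(B)$, the component of any $v\in\mathcal{R}(B)$ lying in this intersection contributes nothing to $\langle u,v\rangle$, so one may replace $v\in\mathcal{R}(B)\cap(\mathcal{N}(A)\cap\mathcal{R}(B))^{\perp}$ by an arbitrary $v\in\mathcal{R}(B)$ without changing the supremum. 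Therefore $c(\mathcal{N}(A),\mathcal{R}(B))$ coincides with the cosine of the angle between $H_i$ and $\mathcal{R}(B)$, and positivity of that angle is equivalent to closedness of $\mathcal{N}(A)+\mathcal{R}(B)$, which by the previous steps is equivalent to $AB$ having closed range.

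I expect the main obstacle to be the rigorous execution of the first two steps, namely the transfer of closedness through the isomorphism $A|_{\mathcal{N}(A)^{\perp}}$ together with the projection lemma stating that $P_L(M)$ is closed if and only if $M+L^{\perp}$ is closed; the latter is the real engine of the argument and should be proved (or carefully cited) in full, since everything downstream depends on it. By comparison, the identification of the Friedrichs angle with the stated angle between $H_i$ and $\mathcal{R}(B)$ is an essentially formal orthogonality computation.
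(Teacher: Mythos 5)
Your argument is correct, but note that the paper itself offers no proof of this statement: Theorem \ref{th1} is quoted verbatim from Bouldin \cite{boul2} and used as a black box, so there is no internal proof to compare against. What you have written is a correct, self-contained reconstruction along the standard lines. Step one (transport of closedness through the topological isomorphism $A|_{\mathcal{N}(A)^{\perp}}$, giving $\mathcal{R}(AB)$ closed iff $P_{\mathcal{N}(A)^{\perp}}(\mathcal{R}(B))$ closed) is sound, since $\mathcal{R}(A)$ and $\mathcal{N}(A)^{\perp}$ are closed, so relative and absolute closedness agree. The projection lemma you flag as the engine is genuinely easy to supply: $M+N=N\oplus P_{N^{\perp}}(M)$ as an orthogonal sum, and an orthogonal sum of a closed subspace with a subspace of its orthocomplement is closed iff the latter is closed; applied with $M=\mathcal{R}(B)$, $N=\mathcal{N}(A)$ this gives exactly the equivalence with closedness of $\mathcal{N}(A)+\mathcal{R}(B)$. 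The passage to the Friedrichs angle criterion, and the orthogonality computation showing that the Friedrichs angle of the pair $(\mathcal{N}(A),\mathcal{R}(B))$ equals the (Dixmier) angle between $H_i=\mathcal{N}(A)\cap(\mathcal{N}(A)\cap\mathcal{R}(B))^{\perp}$ and all of $\mathcal{R}(B)$, are both correct; the only caveat is that the closed-sum/positive-angle theorem is itself a nontrivial classical result and should be cited precisely (e.g.\ Deutsch's survey on angles between subspaces, or Bouldin \cite{boul2} itself), since the paper gives you no cover for it.
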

A similar result can be found in Izumino \cite{Izum}, this time
using orthogonal projections :
\begin{proposition} \label{pr1} Let $A$ and $B$ be operators with closed range. Then, $AB$ has closed
range if and only if $A^\dagger ABB^\dagger$ has closed range.
\end{proposition}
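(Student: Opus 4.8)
The plan is to turn the statement into a direct comparison of the two ranges, after which a single application of the bounded inverse theorem settles both implications at once. First I would record, from the defining relations \eqref{eq-MooPenr}, that $A^\dagger A = P_{A^*}$ is the orthogonal projection of $\mathcal H$ onto $\mathcal R(A^*)$ and that $BB^\dagger = P_B$ is the orthogonal projection onto $\mathcal R(B)$. Since $P_B$ is surjective onto $\mathcal R(B)$, computing the ranges gives
\[
\mathcal R(AB) = A\bigl(\mathcal R(B)\bigr), \qquad \mathcal R\bigl(A^\dagger A B B^\dagger\bigr) = P_{A^*}\bigl(\mathcal R(B)\bigr),
\]
as genuine (not merely up-to-closure) equalities. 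Hence the whole question is whether the subspace $A(\mathcal R(B))$ is closed exactly when $P_{A^*}(\mathcal R(B))$ is.

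Next I would exploit the orthogonal decomposition $\mathcal H = \mathcal R(A^*)\oplus \mathcal N(A)$, which holds because $A$ has closed range and therefore $\mathcal R(A^*) = \mathcal N(A)^\perp$. For any $w$ we have $Aw = A(P_{A^*}w)$, since $w - P_{A^*}w \in \mathcal N(A)$; letting $w$ range over $\mathcal R(B)$ yields
\[
\mathcal R(AB) = A\bigl(\mathcal R(B)\bigr) = A\bigl(P_{A^*}\mathcal R(B)\bigr) = A\bigl(\mathcal R(A^\dagger A B B^\dagger)\bigr).
\]
In particular $\mathcal R(A^\dagger A B B^\dagger) \subseteq \mathcal R(A^*)$, and its image under $A$ is precisely $\mathcal R(AB)$.

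The final step is the bounded inverse theorem. Because $A$ has closed range, its restriction $A|_{\mathcal R(A^*)} : \mathcal R(A^*) \to \mathcal R(A)$ is a bounded bijection between two closed subspaces of $\mathcal H$: it is injective since $\mathcal N(A)\cap \mathcal R(A^*) = \{0\}$, and surjective since $A$ annihilates $\mathcal N(A)$, so that $A(\mathcal R(A^*)) = \mathcal R(A)$. Consequently it is a linear homeomorphism, and a homeomorphism carries closed subspaces to closed subspaces and back. As $\mathcal R(A^\dagger A B B^\dagger)$ is a subspace of $\mathcal R(A^*)$ whose image under this homeomorphism equals $\mathcal R(AB)$, it is closed if and only if $\mathcal R(AB)$ is closed, which is exactly the assertion.

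The main obstacle I anticipate is care rather than depth: one must check that the two range identities are exact equalities (this is what lets the projection picture replace $B^\dagger$ by the projection $P_B$ and makes the homeomorphism argument legitimate), and one must justify the homeomorphism via the bounded inverse theorem, for which the standing hypothesis that both $A$ and $B$ have closed range is precisely what is required. Notably, no self-adjointness or EP property of $A$ or $B$ enters; only the closedness of $\mathcal R(A)$ and $\mathcal R(B)$ is used.
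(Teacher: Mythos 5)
Your proof is correct. Note, though, that the paper does not prove this proposition at all: it is stated as a quoted result from Izumino \cite{Izum}, so there is no internal argument to compare yours against. Your route is a clean, self-contained one: the exact identities $\mathcal{R}(AB)=A(\mathcal{R}(B))$ and $\mathcal{R}(A^\dagger ABB^\dagger)=P_{A^*}(\mathcal{R}(B))$ are right, the reduction $A(\mathcal{R}(B))=A\bigl(P_{A^*}\mathcal{R}(B)\bigr)$ is justified because $\mathcal{R}(B)\ni w\mapsto w-P_{A^*}w\in\mathcal{N}(A)$, and the bounded inverse theorem does apply since $A|_{\mathcal{R}(A^*)}:\mathcal{R}(A^*)\to\mathcal{R}(A)$ is a bounded bijection between closed (hence complete) subspaces --- its inverse being $A^\dagger|_{\mathcal{R}(A)}$ --- so relative closedness transfers both ways, and relative closedness coincides with closedness in $\mathcal{H}$ because $\mathcal{R}(A^*)$ and $\mathcal{R}(A)$ are themselves closed (a point worth stating explicitly). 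Izumino's published proof is organized differently, working with the projections and norm/minimum-modulus estimates in the spirit of the angle condition of Theorem \ref{th1}, whereas your argument buys the same equivalence with only the closed range theorem and the open mapping theorem, and in addition exhibits an explicit homeomorphism carrying $\mathcal{R}(A^\dagger ABB^\dagger)$ onto $\mathcal{R}(AB)$, which is slightly more information than the bare equivalence.
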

 We will use the above two results to prove the existence
of the Moore- Penrose inverse of appropriate operators which will be
used in our work.
\\Another tool used in this work, is
the reverse order law for the Moore-Penrose inverses. In general,
 the reverse order law does not hold. Conditions under which the reverse order law holds, are described in the following proposition which is a restatement of a part of R. Bouldin's
theorem \cite{boul} that holds for operators and matrices.
\begin{proposition}
Let $A,B$ be bounded operators on $\mathcal{H}$ with closed range.
Then $(AB)^\dagger=B^\dagger A^\dagger $ if and only if the
following three conditions hold:

i) The range of $AB$ is closed,

ii) $A^\dagger A$ commutes with $BB^*$,

iii) $BB^\dagger$ commutes with $A^*A$.
\end{proposition}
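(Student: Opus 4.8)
The plan is to characterise the equality directly through the four Moore--Penrose conditions \eqref{eq-MooPenr}, applied to $M=AB$ and the candidate inverse $X=B^\dagger A^\dagger$. Throughout I write $P=BB^\dagger$ for the orthogonal projection onto $\mathcal{R}(B)$ and $Q=A^\dagger A$ for the orthogonal projection onto $\mathcal{R}(A^*)$, so that the two mixed products simplify to $MX=ABB^\dagger A^\dagger=APA^\dagger$ and $XM=B^\dagger A^\dagger AB=B^\dagger QB$. The single observation that drives everything is that $P$ and $Q$ commute as soon as either (ii) or (iii) holds: since $\mathcal{R}(BB^*)=\mathcal{R}(B)$ is closed, $P$ is the spectral projection of the positive operator $BB^*$ onto the part of its spectrum away from $0$, and likewise $Q$ is the spectral projection of $A^*A$ away from $0$; hence commuting with $BB^*$ forces $Q$ to commute with $P$, and commuting with $A^*A$ forces $P$ to commute with $Q$. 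In either case $PQ=QP$ is the orthogonal projection onto $\mathcal{R}(B)\cap\mathcal{R}(A^*)$.

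For sufficiency ($\Leftarrow$) I would first dispatch the two outer identities. Using $AQ=A$, $PB=B$ and $PQ=QP$, one gets $MXM=APQB=AQPB=(AQ)(PB)=AB=M$ and, symmetrically, $XMX=B^\dagger QPA^\dagger=B^\dagger PQA^\dagger=(B^\dagger P)(QA^\dagger)=B^\dagger A^\dagger=X$; here condition (i) is what guarantees that $X$ is bounded and that these are genuine operator identities. The two self-adjointness conditions require the full strength of (ii) and (iii). Writing $A^\dagger=A^*(AA^*)^\dagger$, I have $MX=APA^*(AA^*)^\dagger$ while $(MX)^*=(AA^*)^\dagger APA^*$, so $MX$ is self-adjoint precisely when $APA^*$ commutes with $(AA^*)^\dagger$, equivalently with $AA^*$; and this commutation is exactly what (iii) delivers, since $AA^*\cdot APA^*=A(A^*A)PA^*=AP(A^*A)A^*=APA^*\cdot AA^*$. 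The identity $XM=B^\dagger QB$ is treated symmetrically, the commutation of $B^*QB$ with $B^*B$ now coming from (ii). With all four conditions verified, $X=(AB)^\dagger$.

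For necessity ($\Rightarrow$), condition (i) is immediate, because $B^\dagger A^\dagger$ is bounded and an operator admitting a bounded generalized inverse has closed range, as recalled in the preliminaries. For (ii) and (iii) I would pass to the equivalent range formulation: a short computation shows that (iii) is equivalent to $\mathcal{R}(A^*AB)\subseteq\mathcal{R}(B)$ and (ii) to $\mathcal{R}(BB^*A^*)\subseteq\mathcal{R}(A^*)$ --- indeed, $PA^*AB=A^*AB$ yields $PA^*AP=A^*AP$ after right multiplication by $B^\dagger$, and taking adjoints gives $A^*AP=PA^*A$. It then remains to extract these inclusions from the hypothesis $(AB)^\dagger=B^\dagger A^\dagger$ by analysing the projections $MX=APA^\dagger$ onto $\mathcal{R}(AB)$ and $XM=B^\dagger QB$ onto $\mathcal{R}(B^*A^*)$. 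I expect this last step to be the main obstacle: the self-adjointness of $APA^\dagger$ yields directly only the projection-level relation $PQ=QP$, and upgrading it to the full operator commutativity asserted in (ii) and (iii) is precisely the delicate content of Bouldin's theorem \cite{boul}, which I would invoke rather than reprove.
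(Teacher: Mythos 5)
Your sufficiency argument is sound and checks out in detail: with $P=BB^\dagger$, $Q=A^\dagger A$, either (ii) or (iii) does force $PQ=QP$ (commuting with the positive closed-range operator $BB^*$ or $A^*A$ gives commutation with its spectral projection onto the closure of its range), the identities $MXM=M$ and $XMX=X$ then follow exactly as you write, and the self-adjointness of $MX=APA^\dagger$ and $XM=B^\dagger QB$ via $A^\dagger=A^*(AA^*)^\dagger$, $B^\dagger=(B^*B)^\dagger B^*$ and conditions (iii), (ii) respectively is correct. (A small quibble: (i) is not needed to make $X=B^\dagger A^\dagger$ bounded --- that is automatic since $A$ and $B$ have closed range; its role is to make $(AB)^\dagger$ meaningful, and in fact closedness of $\mathcal{R}(AB)$ already follows once a bounded $X$ satisfies the four Penrose equations for $AB$.)

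The genuine gap is the necessity direction, and you name it yourself: after reducing (ii) and (iii) to the range inclusions $\mathcal{R}(BB^*A^*)\subseteq\mathcal{R}(A^*)$ and $\mathcal{R}(A^*AB)\subseteq\mathcal{R}(B)$ (that reduction is fine), you do not derive these from the hypothesis $(AB)^\dagger=B^\dagger A^\dagger$, but instead propose to ``invoke rather than reprove'' Bouldin's theorem --- which is precisely the statement under discussion, so as a self-contained proof this half is circular and missing. The self-adjointness of $APA^\dagger$ and $B^\dagger QB$ alone indeed only gives projection-level information, and extracting the full commutations requires a further argument (e.g.\ Greville/Izumino-style manipulations starting from $B^\dagger A^\dagger=(AB)^*\bigl[(AB)(AB)^*\bigr]^\dagger$, or Bouldin's gap/angle analysis). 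For comparison, the paper itself offers no proof at all: the proposition is presented explicitly as a restatement of part of Bouldin's theorem \cite{boul} (see also Izumino \cite{Izum}), so your write-up actually supplies more than the paper does (a complete, elementary ``if'' direction), but it does not constitute a proof of the stated equivalence.
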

 A corollary of the above theorem is the following proposition that can be found
 in Karanasios- Pappas
\cite{kar} and we will use it in our case.
\begin{proposition}\label{pr2}
  Let $A,T\in \mathcal{B}(\mathcal{H})$ be two operators such that $A$
  is invertible and $T$ has closed range. Then
  \[
      (TA)^\dagger= A^{-1}T^\dagger\quad \mbox{if and only if} \quad
      \mathcal{R}(T)\in \mbox{Lat}\,(AA^*).
  \]
\end{proposition}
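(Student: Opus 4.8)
The plan is to obtain Proposition \ref{pr2} as a direct consequence of the reverse order law stated just above, using the invertibility of $A$ to collapse all but one of Bouldin's three conditions. Concretely, I would apply that proposition to the factorization $TA$, with $T$ in the role of the left factor and $A$ in the role of the right factor, so that the reverse order law reads $(TA)^\dagger = A^\dagger T^\dagger$; since $A$ is invertible we have $A^\dagger = A^{-1}$, and the identity becomes exactly $(TA)^\dagger = A^{-1}T^\dagger$. Thus it suffices to determine when conditions (i)--(iii) of that proposition hold in the present situation.

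First I would dispose of the two automatic conditions. Condition (i), that $\mathcal{R}(TA)$ be closed, holds because $A$ is onto, whence $\mathcal{R}(TA) = \mathcal{R}(T)$, which is closed by hypothesis. Condition (iii), that $AA^\dagger$ commute with $T^*T$, is immediate since $AA^\dagger = AA^{-1} = I$. Hence the entire content of the equivalence is carried by condition (ii): $T^\dagger T$ commutes with $AA^*$.

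It then remains to translate this commutation into the invariance statement. Recall that $T^\dagger T = P_{T^*}$ is the orthogonal projection onto $\mathcal{R}(T^*)$, and that $AA^*$ is self-adjoint. The elementary fact I would invoke is that a self-adjoint operator $S$ commutes with an orthogonal projection $P$ if and only if $\mathcal{R}(P)$ is invariant under $S$: one direction follows from $Sx = SPx = PSx \in \mathcal{R}(P)$ for $x \in \mathcal{R}(P)$, and the converse uses that for self-adjoint $S$ an invariant subspace is automatically reducing, so $S$ leaves both $\mathcal{R}(P)$ and its orthogonal complement invariant and therefore commutes with $P$. Applying this with $S = AA^*$ and $P = P_{T^*}$ turns condition (ii) into $\mathcal{R}(T^*) \in \mbox{Lat}(AA^*)$, which coincides with the stated $\mathcal{R}(T) \in \mbox{Lat}(AA^*)$ whenever $T$ is EP — in particular for the positive (hence EP) operators to which Proposition \ref{pr2} is applied in this work — since then $\mathcal{R}(T) = \mathcal{R}(T^*)$. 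This settles both implications at once.

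The step I expect to require the most care is precisely this last translation: one must keep track of the fact that the projection appearing in condition (ii) is $T^\dagger T = P_{\mathcal{R}(T^*)}$ rather than $TT^\dagger = P_{\mathcal{R}(T)}$, so that it is the range of $T^*$ (equivalently of $T$, in the EP setting) that must be $AA^*$-invariant. As an independent check one can bypass the reverse order law and verify the four Penrose equations for $M := A^{-1}T^\dagger$ directly: the equations $TA\,M\,TA = TA$, $M\,TA\,M = M$, and the self-adjointness of $TA\,M = TT^\dagger$ hold with no hypothesis on $A$ beyond invertibility, while only the self-adjointness of $M\,TA = A^{-1}(T^\dagger T)A = A^{-1}P_{T^*}A$ forces $P_{T^*}$ to commute with $AA^*$, recovering the same condition.
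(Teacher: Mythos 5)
Your route is exactly the one the paper intends: the paper offers no detailed argument, presenting the proposition as a corollary of Bouldin's reverse-order-law criterion (its Proposition 2.6), and your specialization of that criterion to the product $TA$ with $A$ invertible is carried out correctly --- conditions (i) and (iii) collapse as you say, and condition (ii) becomes the commutation of $T^\dagger T=P_{T^*}$ with $AA^*$, which by the projection/invariance lemma is $\mathcal{R}(T^*)\in\mbox{Lat}\,(AA^*)$. Your closing check via the four Penrose equations is also correct and pins down the same necessary and sufficient condition.

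The point that needs flagging is the last translation step, which you yourself identify as delicate: the condition your argument (and the Penrose-equation check) yields is on $\mathcal{R}(T^*)$, whereas the proposition as stated asserts the equivalence with $\mathcal{R}(T)\in\mbox{Lat}\,(AA^*)$ for an arbitrary closed-range $T$, with no EP hypothesis. Your bridge $\mathcal{R}(T)=\mathcal{R}(T^*)$ is available only when $T$ is EP, so your proof does not establish the statement as literally written; in fact, without EP the stated equivalence can fail. For instance, on $\mathbb{C}^3$ take $Te_2=e_1$, $Te_1=Te_3=0$, so $\mathcal{R}(T)=\mbox{span}(e_1)$ and $\mathcal{R}(T^*)=\mbox{span}(e_2)$, and let $A$ be the positive square root of $S=AA^*$ which acts as the identity on $\mbox{span}(e_1)$ and as $\left(\begin{smallmatrix}2&1\\1&2\end{smallmatrix}\right)$ on $\mbox{span}(e_2,e_3)$: then $\mathcal{R}(T)\in\mbox{Lat}\,(AA^*)$ but $P_{T^*}$ does not commute with $AA^*$, so $(TA)^\dagger\neq A^{-1}T^\dagger$. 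So what you have proved is the corrected statement (with $\mathcal{R}(T^*)$, equivalently with an EP assumption on $T$), which is also what the paper actually needs; note, however, that your remark that the proposition is only invoked for positive operators in this paper is not quite accurate --- in Corollary 3.5 it is applied with the constraint operator $A$ (merely singular with closed range) in the role of the closed-range factor, so the $\mathcal{R}(T)$ versus $\mathcal{R}(T^*)$ distinction is not vacuous there either.
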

\section{The Generalized inverse and minimization of Quadratic forms}
Let $Q$ be a symmetric positive definite symmetric matrix. Then, $Q$
can be
written as $Q= UDU^*$, where $U$ is unitary and $D$ is diagonal.\\
Let $D^{\frac{1}{2}}$ denote the positive solution of $X^2 = D$, and
let $D^{-\frac{1}{2}}$ denote $(D^{\frac{1}{2}})^{-1}$, which exists
since $Q$ is positive definite.\\The following theorem can be found
in Ben Israel \cite{Israel2}.
\begin{theorem}\label{th2}Consider the equation $Ax = b$ .\\If the set $S = \{x:Ax = b\}$ is not empty, the the
problem : $$ minimize \langle x,Qx\rangle,  x\in S $$ has the unique
solution $$x= UD^{-\frac{1}{2}}(AU D^{-\frac{1}{2}})^\dagger b.$$
\end{theorem}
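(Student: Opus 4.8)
The plan is to transform the constrained minimization into an unconstrained one by a change of variables that turns the quadratic form into the squared norm. Since $Q$ is positive definite, $D^{-\frac{1}{2}}$ exists, and I would substitute $x = UD^{-\frac{1}{2}}y$, so that $y = D^{\frac{1}{2}}U^*x$ is a bijective change of coordinates. Under this substitution the objective becomes
\begin{equation}
\langle x,Qx\rangle = \langle UD^{-\frac{1}{2}}y,\, UDU^*UD^{-\frac{1}{2}}y\rangle = \langle y,y\rangle = \|y\|^2,
\end{equation}
using $U^*U = I$ and $D^{-\frac{1}{2}}DD^{-\frac{1}{2}} = I$. Thus minimizing $\langle x,Qx\rangle$ over $S$ is equivalent to minimizing $\|y\|^2$ over the image of $S$ under the map $x\mapsto D^{\frac{1}{2}}U^*x$.

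Next I would rewrite the constraint in terms of $y$. The condition $Ax=b$ becomes $A(UD^{-\frac{1}{2}}y)=b$, that is $(AUD^{-\frac{1}{2}})y=b$. Writing $M = AUD^{-\frac{1}{2}}$, the problem is now to find the minimal-norm vector $y$ satisfying $My=b$. This is exactly the setting of Proposition~\ref{p1} and Proposition~\ref{p2}: since $S$ is assumed nonempty the system $My=b$ is consistent, and the unique minimal-norm solution is $y = M^\dagger b = (AUD^{-\frac{1}{2}})^\dagger b$ by Proposition~\ref{p2}. Back-substituting through $x = UD^{-\frac{1}{2}}y$ yields the claimed formula
\begin{equation}
x = UD^{-\frac{1}{2}}(AUD^{-\frac{1}{2}})^\dagger b.
\end{equation}

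The point that requires care is the equivalence between \emph{minimal norm of $y$} and \emph{minimality of the quadratic form in $x$}, together with uniqueness. Because $x\mapsto y=D^{\frac{1}{2}}U^*x$ is a linear bijection and $\langle x,Qx\rangle = \|y\|^2$ holds identically, the minimizer of the quadratic form over $S$ corresponds precisely to the minimizer of $\|y\|^2$ over $\{y : My=b\}$; the latter is unique since the solution set is closed and convex (as noted after Proposition~\ref{p1}), and its unique minimal-norm element is $M^\dagger b$. I expect the main obstacle to be purely verifying that this change of variables is legitimate and that the transported objective is genuinely the norm—i.e. confirming $D^{-\frac{1}{2}}DD^{-\frac{1}{2}}=I$ and that $U$ unitary makes the substitution invertible—rather than any deep analytic difficulty, since in the finite-dimensional positive-definite case all operators involved are bounded with closed range and the generalized inverse $M^\dagger$ is well defined.
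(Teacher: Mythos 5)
Your proposal is correct and follows essentially the same route the paper takes: the change of variables $y=D^{\frac{1}{2}}U^*x$ turning $\langle x,Qx\rangle$ into $\|y\|^2$, reduction of the constraint to $(AUD^{-\frac{1}{2}})y=b$, and the minimal-norm solution $y=M^\dagger b$ via Proposition \ref{p2}, then back-substitution. The paper itself only cites Ben-Israel for this finite-dimensional statement, but its proof of the infinite-dimensional analogue (Theorem \ref{th3}) is exactly this argument, with the extra work there being the closed-range issue that, as you note, does not arise in finite dimensions.
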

\subsection{Positive Diagonizable Quadratic Forms}
A generalization of the above theorem in infinite dimensional
Hilbert spaces, is by replacing $Q$ with an invertible positive
operator $T$ which is diagonizable. The operator $A$ must be
singular, otherwise this problem is trivial. Since $T$ is
diagonizable, we have that $T = U^* T_k U$.\\We need first the
following Lemma. Note that in the infinite dimensional case the
Moore-Penrose inverse of an operator is bounded if and only if the
operator has closed range.
\begin{lemma} \label{l1} Let $T \in \mathcal{B}(\mathcal{H})$ be an invertible positive
operator with closed range
 which is diagonizable and  $A\in \mathcal{B}(\mathcal{H})$
singular with closed range.\\Then, the range of $AU^* X^{-1}$ is
closed , where $X$ is the unique solution of the equation $X^2 =
T_k$.\begin{proof} Using Theorem \ref{th1}, the range of $U^*
X^{-1}$ is closed since $H_i =\mathcal{N}(U^*) \ominus
\mathcal{R}(X^{-1}) = {0}$, and so $H_i$ is trivial, therefore the
angle between $U^*$ and $X^{-1}$ is equal to $\frac{\pi}{2}$. Hence,
the range of $AU^* X^{-1}$ is closed because using proposition \ref
{pr1} we can see that $A^\dagger A (U^* X^{-1})(U^* X^{-1})^\dagger
= P_{A^*} U^* X^{-1} XU = P_{A^*}$ has closed range.
\end{proof}
\end{lemma}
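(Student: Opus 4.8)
The plan is to reduce the whole statement to the bounded invertibility of the square root $X$. First I would record that, because $T$ is an invertible positive operator with closed range and $U$ is unitary, the diagonal operator $T_k = UTU^*$ is boundedly invertible; hence its diagonal entries satisfy $\inf_i k_i > 0$. The positive square root $X$ of $T_k$ is then the diagonal operator with entries $\sqrt{k_i}$, and since $\inf_i \sqrt{k_i} > 0$ it too is boundedly invertible, with $X^{-1}$ bounded. Consequently $B := U^* X^{-1}$, being the product of a unitary and a bounded invertible operator, is itself a bounded bijection of $\mathcal{H}$; in particular $\mathcal{R}(B) = \mathcal{H}$ is closed, and $B^\dagger = B^{-1} = XU$.

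Next I would pass from $B$ to the product $AB = AU^* X^{-1}$. The most economical route uses Izumino's Proposition \ref{pr1}: since both $A$ and $B$ have closed range, $AB$ has closed range if and only if $A^\dagger A\, BB^\dagger$ does. But $B$ is invertible, so $BB^\dagger = BB^{-1} = I$ and therefore $A^\dagger A\, BB^\dagger = A^\dagger A = P_{A^*}$, the orthogonal projection onto $\mathcal{R}(A^*)$. A projection is automatically a closed-range operator, so Proposition \ref{pr1} immediately delivers the closedness of $\mathcal{R}(AU^* X^{-1})$. One could equally finish by a one-line direct argument: $B$ surjective forces $\mathcal{R}(AB) = A(\mathcal{R}(B)) = \mathcal{R}(A)$, which is closed by hypothesis; and if one prefers to exhibit $\mathcal{R}(B)$ itself as closed through the machinery already assembled, Theorem \ref{th1} applies with $H_i = \mathcal{N}(U^*) \ominus \mathcal{R}(X^{-1}) = \{0\}$ being trivial, so the relevant angle is maximal.

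The step I expect to be the only genuine obstacle is the bounded invertibility of $X$, and it is exactly here that the hypothesis that $T$ is invertible (as opposed to merely injective) is indispensable. In infinite dimensions a positive diagonal operator may have $k_i \to 0$ while remaining injective; then $X^{-1}$ is unbounded, $U^* X^{-1}$ fails to be a bounded operator, and the reduction above collapses. Securing $\inf_i k_i > 0$ from the closedness of $\mathcal{R}(T)$ together with invertibility of $T$ is therefore the crux; once $X^{-1} \in \mathcal{B}(\mathcal{H})$ is in hand, the remaining steps are purely formal manipulations of projections and inverses.
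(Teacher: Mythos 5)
Your proposal is correct, and its core is the same as the paper's: apply Izumino's Proposition \ref{pr1} to the pair $A$, $B=U^*X^{-1}$ and observe that $A^\dagger A\,BB^\dagger = P_{A^*}$, a projection, has closed range. Where you diverge is in how the closedness of $\mathcal{R}(B)$ is established and in an extra observation that makes the heavier machinery dispensable. The paper invokes Bouldin's angle criterion (Theorem \ref{th1}) with $H_i=\mathcal{N}(U^*)\ominus\mathcal{R}(X^{-1})=\{0\}$; you instead note that $T$ invertible and positive forces $\inf_i k_i>0$, hence $X^{-1}\in\mathcal{B}(\mathcal{H})$ and $B$ is a bounded bijection with $\mathcal{R}(B)=\mathcal{H}$ --- a cleaner justification of the same fact, and one that correctly isolates the invertibility of $T$ as the point where the argument would break down (if $k_i\to 0$, $X^{-1}$ is unbounded and the lemma's setup collapses). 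Your one-line alternative is also worth highlighting: since $B$ is surjective, $\mathcal{R}(AB)=A(\mathcal{R}(B))=\mathcal{R}(A)$, which is closed by hypothesis, so neither Theorem \ref{th1} nor Proposition \ref{pr1} is actually needed; the paper's route, by contrast, keeps the statement in a form that generalizes to the semidefinite case (Theorem \ref{th6}), where the analogue of $B$, namely $U_1R^\dagger$, is no longer invertible and the projection computation genuinely matters. In short: same skeleton, but your treatment of the first step is simpler and your remark exposes that, in this invertible case, the lemma is essentially trivial.
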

We are now in condition to prove Theorem \ref{th3}.
\begin{theorem}\label{th3} Consider the equation $Ax = b$, with $A\in \mathcal{B}(\mathcal{H})$
singular with closed range and $b \in \mathcal{H}$.\\ If the set $S
= \{x:Ax = b\}$ is not empty, then the problem :
$$ minimize \langle x,Tx\rangle, x\in S $$
with $T \in \mathcal{B}(\mathcal{H})$ an invertible positive
diagonizable operator with closed range has the unique solution
$$\hat{x} = U^*X^{-1}(AU^* X^{-1})^\dagger b$$ where $X$ is the
unique solution of the equation $X^2 = T_k$.
\end{theorem}
\begin{proof}The idea of the proof is similar to Ben Israel \cite{Israel2}, but the existence of a bounded Moore- Penrose inverse is not trivial like in the finite dimensional case.
\\It is easy to see that since $T = U^* T_k U$
 is positive, $T_k$ is also positive. Then,
$$\langle x,Tx\rangle = \langle x,U^*T_k Ux\rangle = \langle Ux,T_k
Ux\rangle = \langle Ux,X^2 Ux\rangle =$$ $$ \langle XUx,XUx\rangle =
\langle y,y\rangle$$
\\So, the problem of minimizing $\langle x,Tx\rangle$ is equivalent
of minimizing $\langle y,y\rangle =
\parallel y\parallel^2$ where $y =XUx$. We also have that $y =XUx
\Leftrightarrow x = U^* X^{-1} y$.\\ From proposition \ref{p2} we
know that the minimal norm solution of the equation $Ax = b$ is the
vector $\hat{x} = A^\dagger b$, so by substitution, $AU^* X^{-1} y =
b$ and the minimal norm vector $\hat{y}$ is equal to $\hat{y}= [AU^*
X^{-1}]^\dagger b$, since $AU^* X^{-1}$ is a singular
operator.\\Therefore, $XU\hat{x}= [AU^* X^{-1}]^\dagger b
\Leftrightarrow \hat{x}= U^*X^{-1}(AU^* X^{-1})^\dagger b$

\end{proof}
We can verify that the solution $\hat{x} = U^*X^{-1}(AU^*
X^{-1})^\dagger b$ satisfies the constraint $Ax = b$ :\\We have that
$A \hat{x} = AU^*X^{-1}(AU^* X^{-1})^\dagger b = SS^\dagger b= P_S
b$, where $S = AU^* X^{-1}$ and we can see that $\mathcal{R}(S) =
\mathcal{R}(AU^* X^{-1}) = \mathcal{R}(A)$ since $X$ and $U$ are
invertible. Therefore, $P_S = P_A $ and $P_S b = P_A b =b$ since $b
\in\mathcal{R}(A)$.\\We can also compute the value of the minimum
$\langle x,Tx\rangle, x\in S $ : $$\langle \hat{x},T\hat{x}\rangle
 = \langle U^*X^{-1}(AU^* X^{-1})^\dagger b, U^*T_kUU^*X^{-1}(AU^* X^{-1})^\dagger
 b \rangle =$$ $$ \langle (AU^* X^{-1})^\dagger b, (AU^* X^{-1})^\dagger
 b \rangle = \parallel (AU^*X^{-1})^\dagger b \parallel^2$$
\subsection{Positive Definite Quadratic Forms}
In this section we extend the results presented above, in the
general case when $T$ is a positive operator following the same
point of view.\\Let $T$ be a positive operator, having a unique
square root $R$.
\begin{theorem}\label{th4}Consider the equation $Ax = b$, with $A\in \mathcal{B}(\mathcal{H})$
singular with closed range and $b \in \mathcal{H}$. If the set $S =
\{x:Ax = b\}$ is not empty, then the problem :
$$ minimize \langle x,Tx\rangle, x\in S $$ with $T \in
\mathcal{B}(\mathcal{H})$ an invertible positive operator with
closed range has the unique solution
$$\hat{x} = R^{-1}(AR^{-1})^\dagger b$$
\end{theorem}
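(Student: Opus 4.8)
The plan is to follow the same substitution strategy used in Theorem \ref{th3}, but replacing the diagonalizing factor $U^*X^{-1}$ by the inverse of the positive square root $R$ of $T$. Since $T$ is an invertible positive operator with closed range, its unique positive square root $R$ is itself self-adjoint and invertible, so $R=R^*$, $R^2=T$, and $R^{-1}$ is bounded. The first step is to rewrite the quadratic form in terms of $R$: because $R$ is self-adjoint, $\langle x,Tx\rangle=\langle x,R^2x\rangle=\langle Rx,Rx\rangle=\|Rx\|^2$. Setting $y=Rx$, which is equivalent to $x=R^{-1}y$ since $R$ is invertible, I would convert the problem of minimizing $\langle x,Tx\rangle$ over $S$ into the problem of minimizing $\|y\|^2$ subject to the transformed constraint $AR^{-1}y=b$.

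The second step is to guarantee that the operator $S:=AR^{-1}$ has a bounded Moore--Penrose inverse, i.e.\ that $\mathcal{R}(AR^{-1})$ is closed; this is the analogue of Lemma \ref{l1} and is the only point where the infinite-dimensional setting genuinely differs from the finite-dimensional argument of Theorem \ref{th2}. Here the verification is short: since $R^{-1}$ is invertible it is in particular surjective, so $\mathcal{R}(AR^{-1})=\mathcal{R}(A)$, which is closed by hypothesis. Alternatively, one may invoke Proposition \ref{pr1} directly, computing $A^\dagger A\,(R^{-1})(R^{-1})^\dagger=A^\dagger A\,R^{-1}R=A^\dagger A=P_{A^*}$, an orthogonal projection and hence an operator with closed range, whence $AR^{-1}$ has closed range as well.

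With boundedness secured, the third step applies Proposition \ref{p2} to the transformed constraint: the minimal-norm solution of $AR^{-1}y=b$ is $\hat{y}=(AR^{-1})^\dagger b$. Substituting back through $x=R^{-1}y$ yields the claimed minimizer $\hat{x}=R^{-1}(AR^{-1})^\dagger b$, and uniqueness follows from the uniqueness of the minimal-norm solution together with the fact that $y\mapsto R^{-1}y$ is a bijection. Finally, I would verify feasibility exactly as after Theorem \ref{th3}: writing $S=AR^{-1}$ one has $A\hat{x}=SS^\dagger b=P_S b$, and since $\mathcal{R}(S)=\mathcal{R}(A)$ gives $P_S=P_A$, the assumption $b\in\mathcal{R}(A)$ (forced by $S\neq\emptyset$) yields $A\hat{x}=b$.

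I would expect the genuine obstacle to be conceptual rather than computational: ensuring that every inverse and generalized inverse appearing is a bounded operator. The existence of $R$ as a bounded, self-adjoint, invertible square root rests on the spectral theorem for positive invertible operators, and the closedness of $\mathcal{R}(AR^{-1})$ is what legitimizes the use of $(AR^{-1})^\dagger$; both facts are automatic in finite dimensions but must be argued explicitly here.
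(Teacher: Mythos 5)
Your proposal is correct and follows essentially the same route as the paper: rewrite $\langle x,Tx\rangle=\|Rx\|^2$ via the positive square root, substitute $y=Rx$, and apply the minimal-norm solution $(AR^{-1})^\dagger b$ of the transformed constraint. Your direct observation that $\mathcal{R}(AR^{-1})=\mathcal{R}(A)$ is closed because $R^{-1}$ is surjective is in fact a cleaner justification than the paper's appeal to the argument of Lemma \ref{l1}, but the substance of the argument is the same.
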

\begin{proof}
We have that $\langle x,Tx\rangle = \langle Rx,Rx\rangle =
\parallel  Rx \parallel^2=
\parallel y\parallel^2$. So, $$Ax = b \Leftrightarrow \hat{y} =(AR^{-1})^\dagger
b \Leftrightarrow \hat{x} = R^{-1}(AR^{-1})^\dagger b$$ It is easy
to verify that the range of the operator $ AR^{-1}$ is closed, as in
lemma \ref{l1}.
\end{proof}\\We can see by easy computations, that the minimum  $\langle
x,Tx\rangle, x\in S $ is  then equal to  $$\langle \hat{x},T
\hat{x}\rangle =\langle R^{-1}(AR^{-1})^\dagger b,R^2
R^{-1}(AR^{-1})^\dagger b \rangle =$$ $$ \langle (AR^{-1})^\dagger
b, (AR^{-1})^\dagger b \rangle =
\parallel (AR^{-1})^\dagger b \parallel^2 $$

\noindent A natural question to ask, is what happens if the reverse
order law for generalized inverses holds. In this case, the solution
given by Theorem \ref{th4}, using Proposition \ref{pr1} will be as
follows:
\begin{corollary}\label{cor1} Considering all the assumptions of Theorem
\ref{th4}, let  $\mathcal{R}(A)\in \mbox{Lat}\,(T)  $. Then,
$\hat{x} = A^\dagger b$.
\end{corollary}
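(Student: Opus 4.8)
The plan is to start from the closed form $\hat{x} = R^{-1}(AR^{-1})^\dagger b$ supplied by Theorem~\ref{th4}, where $R$ is the positive invertible square root of $T$, and to show that under the extra hypothesis $\mathcal{R}(A)\in\mbox{Lat}\,T$ the reverse order law for the Moore--Penrose inverse collapses the product $(AR^{-1})^\dagger$ to $R\,A^\dagger$. Once that is established, the conclusion is immediate: $\hat{x} = R^{-1}(AR^{-1})^\dagger b = R^{-1}R\,A^\dagger b = A^\dagger b$.

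First I would read the factorization $AR^{-1}$ as a product of the closed-range operator $A$ and the invertible operator $R^{-1}$, and apply Proposition~\ref{pr2} with the invertible factor on the right. Matching the ``$T$'' of that proposition with $A$ and its invertible ``$A$'' with $R^{-1}$, Proposition~\ref{pr2} yields $(AR^{-1})^\dagger = (R^{-1})^{-1}A^\dagger = R\,A^\dagger$ precisely when $\mathcal{R}(A)\in\mbox{Lat}\,\bigl(R^{-1}(R^{-1})^*\bigr)$. Since $R$ is positive, hence self-adjoint, $R^{-1}$ is self-adjoint as well, so $R^{-1}(R^{-1})^* = R^{-2} = (R^2)^{-1} = T^{-1}$. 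The condition to be verified therefore reduces to $\mathcal{R}(A)\in\mbox{Lat}\,T^{-1}$.

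The only step requiring a genuine argument is the passage $\mathcal{R}(A)\in\mbox{Lat}\,T \Rightarrow \mathcal{R}(A)\in\mbox{Lat}\,T^{-1}$, and here I would exploit that $T$ is self-adjoint. Writing $M=\mathcal{R}(A)$, the invariance $TM\subseteq M$ together with $T=T^*$ forces $M^\perp$ to be invariant too, since for $m\in M$ and $m^\perp\in M^\perp$ one has $\langle Tm^\perp, m\rangle = \langle m^\perp, Tm\rangle = 0$ because $Tm\in M$; thus $TM^\perp\subseteq M^\perp$ and $M$ reduces $T$. With respect to $\mathcal{H}=M\oplus M^\perp$ the operator $T$ is then block-diagonal, and as $T$ is invertible each diagonal block is invertible, so $T^{-1}$ is block-diagonal for the same decomposition and leaves $M$ invariant, i.e. $\mathcal{R}(A)\in\mbox{Lat}\,T^{-1}$.

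With the reverse order law justified, I would finish by substituting $(AR^{-1})^\dagger = R\,A^\dagger$ into the formula of Theorem~\ref{th4} to obtain $\hat{x} = A^\dagger b$. I expect the main obstacle to be the careful bookkeeping in Proposition~\ref{pr2} (keeping straight which factor plays the invertible role and correctly computing $R^{-1}(R^{-1})^*$), followed by the reduction argument; the latter genuinely relies on the self-adjointness of $T$ and would fail for a general invertible operator, whereas everything after it is a routine substitution.
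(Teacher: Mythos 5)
Your proof is correct and takes essentially the same route as the paper: apply Proposition \ref{pr2} to the product $AR^{-1}$ (with $R^{-1}$ as the invertible factor) to collapse $(AR^{-1})^\dagger$ to $RA^\dagger$, then substitute into the formula of Theorem \ref{th4}. The only difference is that you explicitly verify the required lattice condition $\mathcal{R}(A)\in\mbox{Lat}\,(R^{-1}(R^{-1})^*)=\mbox{Lat}\,(T^{-1})$ by the reducing-subspace argument for the self-adjoint invertible $T$, a step the paper's one-line proof glosses over by writing $\mbox{Lat}\,(RR^*)=\mbox{Lat}\,(R^2)=\mbox{Lat}\,(T)$; this is added care within the same argument, not a different approach.
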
The proof is obvious, since in this case, $Lat(RR^*)=  Lat(R^2) = Lat(T)$, and  $R^{-1}(AR^{-1})^\dagger
 = R^{-1}R A^\dagger $\\We can also see that in this case, the minimum value of $\langle
x,Tx\rangle, x\in S $ is equal to $\langle A^\dagger b,R^2 A^\dagger
b\rangle= \parallel RA^\dagger b \parallel^2 $\\We will present an
example for Theorem \ref{th4} and corollary \ref{cor1}.
\begin{example}
 Let $T:l_2 \rightarrow l_2  : T(x_1, x_2, x_3, \ldots) =
(x_1, 2x_2, x_3, 2x_4 \ldots)$ which is a bounded diagonal linear
operator.\\Let $L : l_2\rightarrow l_2  : L(x_1, x_2, x_3, \ldots) =
(x_2, x_3, \ldots)$, the well known left shift operator which is
singular and $S = \{x:Lx =
(1,\frac{1}{2},\frac{1}{3},\frac{1}{4},\ldots)\}$. It is also well
known that $L^\dagger = R$, the right shift operator. Since $T$ is
positive and invertible, the problem of minimizing $\langle
x,Tx\rangle$ , $ x \in S $ following Corollary \ref{cor1} has the
unique solution $\hat{x} =
(0,1,\frac{1}{2},\frac{1}{3},\frac{1}{4},\ldots )$.\\Indeed, since
$\mathcal {R} (L) = \mathcal{H}= l_2$ which is invariant under $T$,
we have that $\mathcal {R} (L)\in Lat(T)$ and so
$$\hat{x} = (L)^\dagger (1,\frac{1}{2},\frac{1}{3},\frac{1}{4},
\ldots) = (0,1,\frac{1}{2},\frac{1}{3},\frac{1}{4},\ldots )$$ Using
this vector, we have that the problem of minimizing $ \langle
x,Tx\rangle, x\in S $ has a minimum value as shown in what follows:
$$min \langle x,Tx\rangle = \langle \hat{x},T\hat{x}\rangle = \langle
(0,1,\frac{1}{2},\frac{1}{3},\frac{1}{4},\ldots )
,T(0,1,\frac{1}{2},\frac{1}{3},\frac{1}{4},\ldots )\rangle = $$
$$\langle (0,1,\frac{1}{2},\frac{1}{3},\frac{1}{4},\ldots ),(0,2,\frac{1}{2},2\times\frac{1}{3}, \frac{1}{4}, 2\times\frac{1}{5}\ldots
)\rangle = 0+2\times1 +\frac{1}{2^2}+2\times\frac{1}{3^2}+
\frac{1}{4}+ 2\times\frac{1}{5^2} \ldots$$  $$= \sum_{n=1}^\infty
\frac{1}{n^2} + \sum_{n=0}^\infty \frac{1}{(2n+1)^2}=\frac{\pi^2}{6
}+ \frac{\pi^2}{8}= \frac{7 \pi^2}{24}$$ This value is equal to
$\parallel RA^\dagger b
\parallel^2 $ as presented in the above corollary, since in this
case $R(x_1, x_2, x_3, \ldots) = (x_1, \sqrt{2}x_2, x_3, \sqrt{2}x_4
\ldots)$ and
$$\parallel RA^\dagger b
\parallel^2  = \parallel RL^\dagger
(1,\frac{1}{2},\frac{1}{3},\frac{1}{4},\ldots)\parallel^2 =
0+2\times1 +\frac{1}{2^2}+2\times\frac{1}{3^2}+ \frac{1}{4}+\ldots =
\frac{7 \pi^2}{24}$$ and this verifies Corollary \ref{cor1}.\\We can
see that the minimizing vector found by Theorem \ref{th4} has the
minimum norm among all possible solutions, of the form
$(c,1,\frac{1}{2},\frac{1}{3},\frac{1}{4},\ldots ), c \in
\mathcal{C}$ , as expected.
\end{example}
 \subsection{Positive Semidefinite Quadratic Forms}
We can also consider the case when the positive operator $T$ is
singular, that is, $T$ is positive semidefinite. In this case, since
$ \mathcal{N}(T) \neq\emptyset$,  we have that $\langle x,Tx\rangle
= 0$ for all $x \in \mathcal{N}(T)$ and so, the problem :
$$ minimize \langle x,Tx\rangle, x\in S $$ has many solutions when $\mathcal{N}(T)\cap S \neq\emptyset$.\\In Stoika et al \cite{Stoika}
a method is presented for the minimization of a positive
semidefinite quadratic form under linear constraints, with many
applications in the finite dimensional case. In fact, since this
problem has an entire set of solutions, the minimum norm solution is
given explicitly.
\\A different approach to this problem in both the finite and
infinite dimensional case would be to look among the vectors $ x \in
\mathcal{N}(T)^\perp = \mathcal{R}(T^*) = \mathcal{R}(T)$ for a
minimizing vector for $\langle x,Tx\rangle $. In other words, we
will look for the minimum of $\langle x,Tx\rangle$ under the
constraints $Ax = b, x \in \mathcal{R}(T).$\\Using the fact that $T$
is an $EP$ operator, we will make use of the first two conditions in
the following proposition that can be found in Drivaliaris et al
\cite{Driv}:
\begin{proposition}
\label{prop5} Let $T\in\mathcal{B}(\mathcal{H})$ with closed range.
Then the following are equivalent: \\i) $T$ is EP. \\ii) There exist
Hilbert spaces $\mathcal{K}_1$ and $\mathcal{L}_1$,
$U_1\in\mathcal{B}(\mathcal{K}_1\oplus \mathcal{L}_1,\mathcal{H})$
unitary and $A_1\in\mathcal{B}(\mathcal{K}_1)$ isomorphism such that
$$T=U_1(A_1\oplus 0)U_1^*.$$
iii) There exist Hilbert spaces $\mathcal{K}_2$ and $\mathcal{L}_2$,
$U_2\in\mathcal{B}(\mathcal{K}_2\oplus \mathcal{L}_2,\mathcal{H})$
isomorphism and $A_2\in\mathcal{B}(\mathcal{K}_2)$ isomorphism such
that
$$T=U_2(A_2\oplus 0)U_2^*.$$
iv) There exist Hilbert spaces $\mathcal{K}_3$ and $\mathcal{L}_3$,
$U_3\in\mathcal{B}(\mathcal{K}_3\oplus \mathcal{L}_3,\mathcal{H})$
injective and $A_3\in\mathcal{B}(\mathcal{K}_3)$ isomorphism such
that
$$T=U_3(A_3\oplus 0)U_3^*.$$
\end{proposition}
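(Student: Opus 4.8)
The plan is to prove the four equivalences by establishing the cyclic chain (i)$\Rightarrow$(ii)$\Rightarrow$(iii)$\Rightarrow$(iv)$\Rightarrow$(i), exploiting the observation that conditions (ii)--(iv) differ only in how much one demands of the intertwining operator $U$ (unitary, then isomorphism, then merely injective). Because of this nesting, two of the three forward implications cost nothing, and the only substantial steps are the very first and the closing return.

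For (i)$\Rightarrow$(ii) I would lean on the characterization of EP operators recorded in (\ref{def}): since $T$ is EP with closed range, $\mathcal{H}$ splits as the orthogonal direct sum $\mathcal{R}(T)\mathop{\oplus}^{\perp}\mathcal{N}(T)$. I take $\mathcal{K}_1=\mathcal{R}(T)$ and $\mathcal{L}_1=\mathcal{N}(T)$, and let $U_1:\mathcal{K}_1\oplus\mathcal{L}_1\to\mathcal{H}$ be the unitary $(u,v)\mapsto u+v$ induced by this orthogonal decomposition. Setting $A_1=T|_{\mathcal{R}(T)}$, I would check that $A_1$ is an isomorphism of $\mathcal{R}(T)$: it is injective because $\mathcal{R}(T)\cap\mathcal{N}(T)=\{0\}$; it is onto $\mathcal{R}(T)$ because $T|_{\mathcal{N}(T)}=0$ forces $\mathcal{R}(T)=T(\mathcal{H})=T(\mathcal{R}(T))$; and its inverse is bounded by the closed range hypothesis together with the open mapping theorem. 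A direct computation with the orthogonal projections onto the two summands then yields $T=U_1(A_1\oplus 0)U_1^*$.

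The implications (ii)$\Rightarrow$(iii)$\Rightarrow$(iv) require no argument: a unitary is in particular an isomorphism, and an isomorphism is in particular injective, so the same Hilbert spaces and the same $U$ and $A$ serve throughout. The one implication carrying genuine content is the return (iv)$\Rightarrow$(i). Here I write $B=A_3\oplus 0\in\mathcal{B}(\mathcal{K}_3\oplus\mathcal{L}_3)$ and observe that, because $A_3$ is an isomorphism, $B$ satisfies $\mathcal{N}(B)=\{0\}\oplus\mathcal{L}_3=\mathcal{N}(B^*)$. Since $T=U_3BU_3^*$ and $U_3$ is injective, $Tx=0$ is equivalent to $U_3^*x\in\mathcal{N}(B)$, and likewise $T^*x=U_3B^*U_3^*x=0$ is equivalent to $U_3^*x\in\mathcal{N}(B^*)$; the kernel equality for $B$ then transfers to $\mathcal{N}(T)=\mathcal{N}(T^*)$, which, under the standing closed-range assumption, is exactly the EP property.

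I expect the main obstacle to be the forward direction (i)$\Rightarrow$(ii), where one must verify that $A_1$ possesses a bounded inverse and not merely an algebraic one; this is the place where the closed range of $T$ (and not just its EP-ness) is indispensable. The delicate point in the return direction is a more pleasant surprise: injectivity of $U_3$ alone, with no surjectivity and no lower bound, already suffices to pull the kernels of $B$ and $B^*$ back to those of $T$ and $T^*$, which is precisely why (iv) can afford to assume so little about $U_3$.
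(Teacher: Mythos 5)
Your proposal is correct, and for the only implication the paper actually proves --- (i)$\Rightarrow$(ii), which it presents only as a sketch --- your construction is identical to the paper's: $\mathcal{K}_1=\mathcal{R}(T)$, $\mathcal{L}_1=\mathcal{N}(T)$, $U_1(x_1,x_2)=x_1+x_2$ made unitary by the EP decomposition $\mathcal{R}(T)\mathop{\oplus}^{\perp}\mathcal{N}(T)=\mathcal{H}$, and $A_1=T|_{\mathcal{R}(T)}$, with the closed range of $T$ guaranteeing a bounded inverse for $A_1$. Where you differ is in scope: the paper stops there and defers the full equivalence to the cited reference (Drivaliaris et al.), whereas you close the cycle yourself, observing that (ii)$\Rightarrow$(iii)$\Rightarrow$(iv) are free since unitary $\Rightarrow$ isomorphism $\Rightarrow$ injective, and proving (iv)$\Rightarrow$(i) by the kernel-transfer argument: with $B=A_3\oplus 0$ one has $\mathcal{N}(B)=\mathcal{N}(B^*)=\{0\}\oplus\mathcal{L}_3$, and injectivity of $U_3$ gives $\mathcal{N}(T)=(U_3^*)^{-1}(\mathcal{N}(B))=(U_3^*)^{-1}(\mathcal{N}(B^*))=\mathcal{N}(T^*)$, which under the standing closed-range hypothesis is the EP property as defined in the paper. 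That closing argument is sound and is exactly the point where assuming only injectivity of $U_3$ still suffices, so your write-up is in fact more complete than the paper's own partial proof.
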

We present a sketch of the proof for (1)$\Rightarrow$(2):
\begin{proof}
Let $\mathcal{K}_1=\mathcal{R}(T)$, $\mathcal{L}_1=\mathcal{N}(T)$,
$U_1:\mathcal{K}_1\oplus\mathcal{L}_1\rightarrow\mathcal{H}$ with
$$U_1(x_1,x_2)=x_1+x_2,$$ for all $x_1\in\mathcal{R}(T)$ and
$x_2\in\mathcal{N}(T)$, and
$A_1=T|_{\mathcal{R}(T)}:\mathcal{R}(T)\rightarrow\mathcal{R}(T).$
Since $T$ is EP,
$\mathcal{R}(T)\mathop{\oplus}^{\perp}\mathcal{N}(T)=\mathcal{H}$
and thus $U_1$ is unitary. Moreover it is easy to see that
$U_1^*x=(P_Tx,P_{\mathcal{N}(T)}x),$ for all $x\in \mathcal{H}$. It
is obvious that $A_1$ is an isomorphism. A simple calculation shows
that
$$T=U_1(A_1\oplus 0)U_1^*.$$
\end{proof}\\It is easy to see that when $T=U_1(A_1\oplus 0)U_1^*$ and $T$ is
positive, so is $A_1$, since $\langle x,Tx\rangle = \langle
x_1,A_1x_1\rangle, x_1 \in \mathcal{R}(T)$.
\\ In what follows, $T$
will denote a singular positive operator with a canonical form
$T=U_1(A_1\oplus 0)U_1^*$ , $R$ is the unique solution of the
equation $R^2 = A_1$ and $$R^\dagger = \left[ {\begin{array}{cc}
   R^{-1} & 0  \\
   0 & 0  \\
\end{array}} \right]$$\\ As in the previous cases, since the two
operators $A$ and $R$ are arbitrary, one does not expect that the
range of their product will always be closed.
\\Using Proposition
\ref{prop5},  we have the following theorem:
\begin{theorem}\label{th6} Let $T \in \mathcal{B}(\mathcal{H})$ be an singular positive
operator,  and the equation $Ax = b$, with $A\in
\mathcal{B}(\mathcal{H})$ singular with closed range and $b \in
\mathcal{H}$. If the set $S = \{x \in \mathcal{N}(T)^\perp :Ax =
b\}$ is not empty, then the problem :
$$ minimize \langle x,Tx\rangle, x\in S $$ has the unique solution
$$ \hat{x}=
U_1R^\dagger(AU_1 R^\dagger)^\dagger b$$ assuming that $P_{A^*}P_T$
has closed range.
\end{theorem}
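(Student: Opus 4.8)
The plan is to follow the scheme of Theorems \ref{th3} and \ref{th4}, converting the quadratic form into a squared norm through a square root of $T$, but now carried out inside the $EP$ canonical decomposition provided by Proposition \ref{prop5}. With $T=U_1(A_1\oplus 0)U_1^*$, $A_1$ positive and invertible on $\mathcal{K}_1=\mathcal{R}(T)$, and $R$ the positive square root of $A_1$ extended by $0$ on $\mathcal{L}_1=\mathcal{N}(T)$ (so that $T=U_1R^2U_1^*$ and $R^\dagger$ has the block form displayed above), I would first record the identity $\langle x,Tx\rangle=\langle RU_1^*x,RU_1^*x\rangle=\|RU_1^*x\|^2$. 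Putting $y=RU_1^*x$ turns the objective into $\|y\|^2$, and since $R^\dagger$ inverts $R$ on $\mathcal{K}_1$, every $x\in\mathcal{N}(T)^\perp=\mathcal{R}(T)$ is recovered as $x=U_1R^\dagger y$.

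Next I would transport the constraint. Substituting $x=U_1R^\dagger y$ into $Ax=b$ gives $AU_1R^\dagger y=b$, so by Proposition \ref{p2} the minimal-norm $\hat y$ equals $(AU_1R^\dagger)^\dagger b$, and back-substitution produces the asserted $\hat x=U_1R^\dagger(AU_1R^\dagger)^\dagger b$. Two points deserve attention. First, $\mathcal{R}(R^\dagger)=\mathcal{K}_1\oplus\{0\}$ and $U_1$ carries this subspace onto $\mathcal{R}(T)$, so $\hat x$ automatically lies in $\mathcal{N}(T)^\perp$: the side constraint is built into the formula. Second, one should check that the minimal-norm solution $(AU_1R^\dagger)^\dagger b$ carries no $\mathcal{L}_1$-component — it sits in $\mathcal{R}\big((AU_1R^\dagger)^*\big)\subseteq\mathcal{R}(R^\dagger)=\mathcal{K}_1\oplus\{0\}$ — so that passing back through $x=U_1R^\dagger y$ is consistent with the reduction $y=RU_1^*x$; together with the injectivity of $x\mapsto RU_1^*x$ on $\mathcal{R}(T)$ this forces the minimizer $\hat x$ to be unique.

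The main obstacle, just as in the earlier theorems, is to guarantee that $(AU_1R^\dagger)^\dagger$ is bounded, i.e.\ that $AU_1R^\dagger$ has closed range; this is exactly where the hypothesis on $P_{A^*}P_T$ is used. I would invoke Izumino's criterion (Proposition \ref{pr1}) for the pair $A$ and $B:=U_1R^\dagger$. Because $U_1$ is unitary and $\mathcal{R}(R^\dagger)=\mathcal{K}_1\oplus\{0\}$, one gets $\mathcal{R}(B)=U_1(\mathcal{K}_1\oplus\{0\})=\mathcal{R}(T)$, which is closed, so $B$ has closed range and $BB^\dagger=P_{\mathcal{R}(T)}=P_T$; combined with $A^\dagger A=P_{A^*}$ this yields $A^\dagger A\,BB^\dagger=P_{A^*}P_T$. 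Proposition \ref{pr1} then gives that $AB=AU_1R^\dagger$ has closed range precisely when $P_{A^*}P_T$ does, which is the standing assumption. Pinning down $\mathcal{R}(B)=\mathcal{R}(T)$ and identifying the two projections is the only step that is not pure bookkeeping.

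Finally I would check feasibility: with $S:=AU_1R^\dagger$ one has $A\hat x=SS^\dagger b=P_{\mathcal{R}(S)}b$, and since the feasible set is nonempty there is $x_0\in\mathcal{R}(T)$ with $b=Ax_0\in A(\mathcal{R}(T))=\mathcal{R}(AU_1R^\dagger)$, so $P_{\mathcal{R}(S)}b=b$ and $A\hat x=b$. The minimum value is then $\|(AU_1R^\dagger)^\dagger b\|^2$, computed exactly as after Theorems \ref{th3} and \ref{th4}.
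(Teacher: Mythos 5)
Your proposal is correct and follows essentially the same route as the paper: the EP canonical form $T=U_1(A_1\oplus 0)U_1^*$, reduction of $\langle x,Tx\rangle$ to $\|y\|^2$ with $y=RU_1^*x$ and $x=U_1R^\dagger y$, the minimal-norm solution $\hat y=(AU_1R^\dagger)^\dagger b$ via Proposition \ref{p2}, and Izumino's criterion (Proposition \ref{pr1}) turning the closed-range question for $AU_1R^\dagger$ into the hypothesis on $P_{A^*}P_T$. Your shortcuts (computing $BB^\dagger=P_T$ directly from $\mathcal{R}(U_1R^\dagger)=\mathcal{R}(T)$ instead of invoking Theorem \ref{th1} and Proposition \ref{pr2}) and your explicit checks that $\hat y$ has no $\mathcal{N}(T)$-component and that $A\hat x=b$ are sound refinements of the same argument.
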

\begin{proof}We have that
$$\langle x,Tx\rangle = \langle x,U_1(A_1\oplus 0)U_1^*x\rangle =
\langle U_1^*x,(A_1\oplus 0) U_1^*x\rangle = \langle
U_1^*x,(R^2\oplus 0) U_1^*x\rangle $$ We have that $U_1^*x =(x_1,
x_2)$ and $\langle U_1^*x,(A_1\oplus 0) U_1^*x\rangle = \langle
x_1,A_1x_1\rangle, x_1 \in \mathcal{R}(T).$\\Therefore $\langle
x,Tx\rangle= \langle (R\oplus 0)U_1^*x,(R\oplus 0) U_1^*x\rangle =
\langle Rx_1,R x_1\rangle=\langle y,y\rangle$ , where $y = Rx_1$,
with $x_1 \in \mathcal{N}(T)^\perp$.
\\The problem of minimizing $\langle x,Tx\rangle$ is equivalent of
minimizing $\parallel y\parallel^2$ where $y = Rx_1=(R\oplus
0)U_1^*x \Leftrightarrow x = U_1 (R^{-1}\oplus 0) y = U_1 R^\dagger
y$.\\As before, the minimal norm solution $\hat{y}$ is equal to
$\hat{y}= [AU_1 R^\dagger]^\dagger b$.\\Therefore, $ \hat{x_1}=
U_1R^\dagger(AU_1 R^\dagger)^\dagger b$, with $\hat{x_1} \in S$.\\As
in Theorem \ref {th3} ,
we still have to prove that $ AU_1 R^\dagger$ has closed range.\\
Using Theorem \ref{th1}, the range of $U_1 R^\dagger$ is closed
since $$H_i =\mathcal{N}(U_1^*)\cap (\mathcal{N}(U_1^*)\cap
\mathcal{R}(R^\dagger) )^\bot = {0}$$ and so
 the angle between $U_1^*$ and $R^\dagger$ is
equal to $\frac{\pi}{2}$.\\From Proposition \ref{pr1} the operator
$P_{A^*}P_T$ must have closed range because $$A^\dagger
AU_1R^\dagger(U_1R^\dagger)^\dagger = P_{A^*}U_1 P_R U_1^* =
P_{A^*}U_1 P_{A_1} U_1^*= P_{A^*}P_T$$ making use of Proposition
 \ref{pr2} and the fact that $\mathcal{R}(R) = \mathcal{R}(A_1)= \mathcal{R}(T) $.

\end{proof}
\begin{corollary}\label{cor2}Under all the assumptions of Theorem
\ref{th6} we have that the minimum value of $f(x) =\langle
x,Tx\rangle , x\in S$ is equal to $\parallel (AU_1R^\dagger)^\dagger
b
\parallel^2 $
\end{corollary}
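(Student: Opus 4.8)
The plan is to read off the minimum value directly from the change of variables already carried out in the proof of Theorem~\ref{th6}, and then to confirm it by substituting $\hat{x}$ into the form. Recall that in that proof the constrained problem was transformed, via $y = Rx_1 = (R\oplus 0)U_1^*x$, into the equivalent problem of minimizing $\|y\|^2$ subject to $AU_1R^\dagger y = b$, whose minimal-norm solution is $\hat{y} = (AU_1R^\dagger)^\dagger b$ by Proposition~\ref{p2} (using that $AU_1R^\dagger$ has closed range). Since $\langle x,Tx\rangle = \langle y,y\rangle = \|y\|^2$ along this correspondence, the minimum of $\langle x,Tx\rangle$ over $S$ is exactly $\|\hat{y}\|^2 = \|(AU_1R^\dagger)^\dagger b\|^2$, which is the claimed value.

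First I would make the correspondence precise: for $x\in S\subseteq\mathcal{N}(T)^\perp = \mathcal{R}(T) = \mathcal{K}_1$ we have $U_1^*x = (x_1,0)$ and $y = Rx_1$, while conversely $x = U_1R^\dagger y$ recovers $x$ because $R$ is an isomorphism on $\mathcal{K}_1$. Thus the infimum of the $x$-problem and the infimum of the $y$-problem coincide, and the minimizer $\hat{y}$ transports to $\hat{x} = U_1R^\dagger\hat{y}$, in agreement with Theorem~\ref{th6}. To verify the formula by direct computation (as was done after Theorems~\ref{th3} and~\ref{th4}), I would substitute $\hat{x} = U_1R^\dagger(AU_1R^\dagger)^\dagger b$ into $\langle x,Tx\rangle$ using $T = U_1(R^2\oplus 0)U_1^*$, the unitarity of $U_1$, and $R^\dagger = R^{-1}\oplus 0$. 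Writing $w = (AU_1R^\dagger)^\dagger b$ and using $(R^2\oplus 0)(R^{-1}\oplus 0) = (R\oplus 0)$, the expression collapses to
$$\langle \hat{x}, T\hat{x}\rangle = \langle R^\dagger w, (R\oplus 0)w\rangle = \langle w, (I_{\mathcal{K}_1}\oplus 0)w\rangle,$$
where the last step uses self-adjointness of $R^\dagger$ together with $(R^{-1}\oplus 0)(R\oplus 0) = I_{\mathcal{K}_1}\oplus 0$.

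The one step that is not purely formal — and the point I would treat most carefully — is showing that the projection $I_{\mathcal{K}_1}\oplus 0$ fixes $w$, so that $\langle w,(I_{\mathcal{K}_1}\oplus 0)w\rangle = \|w\|^2$. This follows because $w\in\mathcal{R}((AU_1R^\dagger)^\dagger) = \mathcal{R}((AU_1R^\dagger)^*)$, and since $(AU_1R^\dagger)^* = (R^{-1}\oplus 0)U_1^*A^*$ carries the factor $R^\dagger = R^{-1}\oplus 0$ on the left, its range lies in $\mathcal{R}(R^{-1}\oplus 0) = \mathcal{K}_1\oplus 0$. Hence $w$ is fixed by $I_{\mathcal{K}_1}\oplus 0$, and the minimum value equals $\|w\|^2 = \|(AU_1R^\dagger)^\dagger b\|^2$, as asserted. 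I expect the change-of-variable argument of the first paragraph to be the quickest route, with the direct substitution serving as an independent check; the only genuine obstacle in the latter is the range inclusion $w\in\mathcal{K}_1\oplus 0$, everything else being formal manipulation of the canonical EP decomposition.
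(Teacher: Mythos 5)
Your proposal is correct and follows essentially the same route as the paper: the direct substitution of $\hat{x}=U_1R^\dagger(AU_1R^\dagger)^\dagger b$ into $\langle x,Tx\rangle$ using $T=U_1(R^2\oplus 0)U_1^*$, the collapse $R^\dagger(R\oplus 0)=I_{\mathcal{K}_1}\oplus 0$, and the key range inclusion $\mathcal{R}\bigl((AU_1R^\dagger)^\dagger\bigr)=\mathcal{R}\bigl((AU_1R^\dagger)^*\bigr)\subseteq\mathcal{R}(R)$ to show the projection fixes $(AU_1R^\dagger)^\dagger b$ are exactly the paper's steps. Your preliminary change-of-variables observation is just a restatement of the reduction already made in Theorem \ref{th6}, so it does not constitute a genuinely different argument.
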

\begin{proof}
We have that $$f_{min}(x) = \langle \hat{x},T\hat{x}\rangle =
\langle U_1R^\dagger(AU_1 R^\dagger)^\dagger b, TU_1R^\dagger(AU_1
R^\dagger)^\dagger b \rangle$$ Since $T=U_1(R^2\oplus 0)U_1^*$ we
have that $$f_{min}(x) = \langle U_1 R^\dagger (AU_1
R^\dagger)^\dagger b, U_1(R\oplus 0)(AU_1 R^\dagger)^\dagger
b\rangle=$$  $$\langle P_T (AU_1 R^\dagger)^\dagger b, (AU_1
R^\dagger)^\dagger b\rangle = \parallel (AU_1R^\dagger)^\dagger b
\parallel^2 $$
since $$R^\dagger(R\oplus 0) = (I\oplus 0)= P_T$$ and $
\mathcal{R}(AU_1R^\dagger)^\dagger = \mathcal{R}(AU_1R^\dagger)^* =
\mathcal{R}(RU_1 A^*)\subseteq \mathcal{R}(R) = \mathcal{R}(T) $,
therefore
$$P_T (AU_1 R^\dagger)^\dagger b = (AU_1 R^\dagger)^\dagger b$$
\end{proof}\\
In the sequel, we present an example which clarifies Theorem
\ref{th6} and Corollary \ref{cor2}. In addition, the difference
between the proposed minimization ($x \in \mathcal{N}(T)^\perp$) and
the minimization for all $x \in \mathcal{H}$ is clearly indicated.
\begin{example}
Let $\mathcal{H} = \mathcal{R}^3$, and the positive semidefinite
matrix $$Q = \left[ {\begin{array}{ccc}
   14 & 20 & 28  \\
   20 & 83 & 40  \\
   28 & 40 & 56
\end{array}} \right]$$ We are looking for the minimum of $f(u) = u' Q u, u\in \mathcal{N}(Q)^\perp$ under the constraint $2x+ 2y
-z=10.$\\ Then, all vectors $u \in \mathcal{N}(Q)^\perp$ have the
form $ u = (x, y, 2x)$. The matrices $U, R^\dagger$ are $$U = \left[
{\begin{array}{ccc}
   -0.2926  &  0.3382 &  -0.8944  \\
   -0.7563  & -0.6543   & 0  \\
   -0.5852  &  0.6764  &  0.4472
\end{array}} \right]\qquad R^\dagger = \left[
{\begin{array}{ccc}
   0.0907    &     0    &     0  \\
   0  &  0.1787      &   0  \\
  0  &  0  &  0
\end{array}} \right]$$
Using theorem \ref{th6} we see that the minimizing vector of $f(u)$
under $Au=b, u \in\mathcal{N}(Q)^\perp$, where $ A= \left[
{\begin{array}{ccc}
   2&  1 & -1
\end{array}} \right]$ and b = 10, is $$\hat{u} = U_1R^\dagger(AU_1 R^\dagger)^\dagger b = (-2.8572, 10 ,
-5.7143)$$\\The minimum value of $f(u)$ is then equal to 5442.9\\We
can verify that it is equal to the minimum value found in Corollary
\ref{cor2}.
$$\parallel (AU_1R^\dagger)^\dagger b
\parallel^2 =\parallel (
   -37.313,  -63.644 ,  0) \parallel^2=
5442.9$$
\\This example can be represented graphically as follows, clearly showing the constrained minimization and the uniqueness of the solution:
\begin{center}
\begin{figure}[h!] \label{f1}
\includegraphics[width=2.8in,height=2.5in]{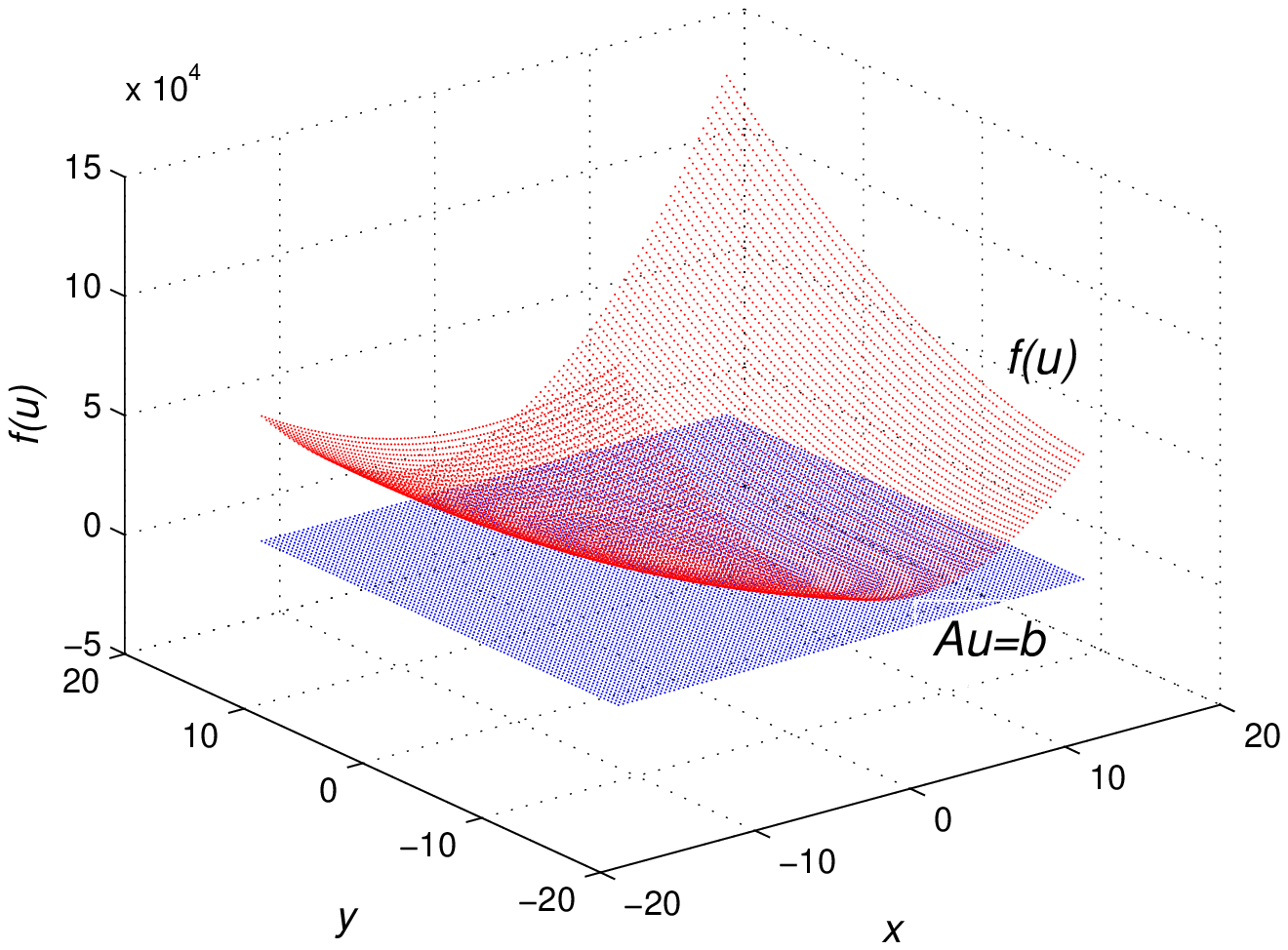}
\includegraphics[width=2.6in,height=2.1in]{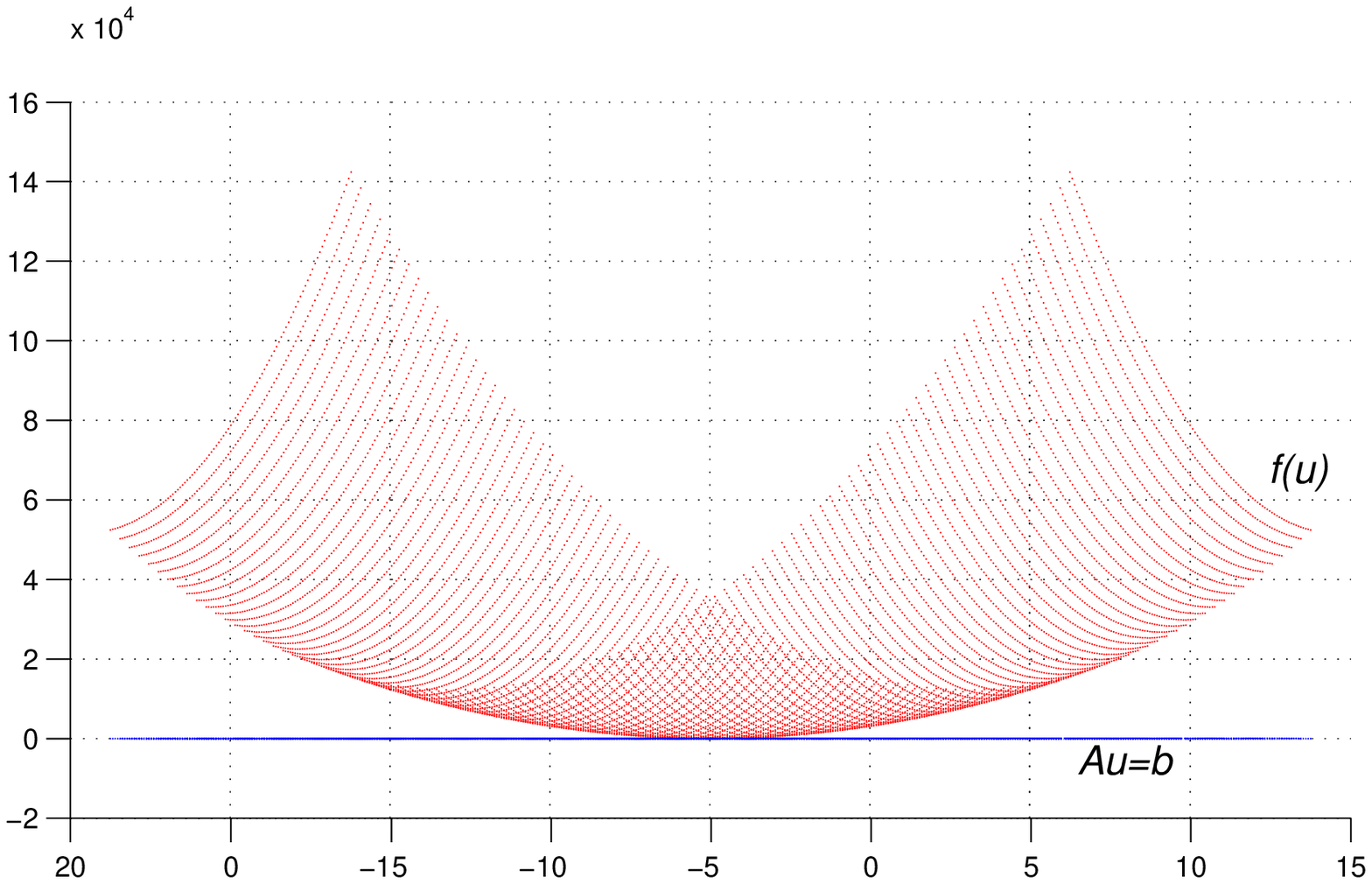}
\caption{Constrained minimization of u'Qu, $u
\in\mathcal{N}(Q)^\perp$ }
\end{figure}
\end{center}
In the case of the minimization of $f(u), u\in \mathcal{R}^3 : Au=
b$, then the set  $$S' = \{u \in \mathcal{N}(Q) :Au = b\}$$ is
nonempty.\\The vector $ v= (4, 0, -2)$ belongs to $ \mathcal{S'}$
and therefore, $f(v)= 0$. The same answer would be given using the
algorithm proposed by Stoika et al \cite{Stoika}.
\end{example}
\section{Conclusions}
In this work we extend a minimization result concerning  non
singular quadratic forms using the Moore-Penrose inverse, to
infinite dimensional Hilbert spaces. In addition, in the case of a
singular quadratic form the minimization takes place for all its non
zero values. This proposed Constrained minimization method has the
advantage of a unique solution and is easy to implement. Practical
importance of this result can be in numerous applications such as
filter design, spectral analysis, direction finding etc. In many of
these cases the quadratic form may be very close to, or even exactly
singular, and therefore the knowledge of the non zero part of the
solution may be of importance.


\end{document}